\numberwithin{equation}{section}
\newtheorem{theorem}{Theorem}[section]
\newtheorem{corollary}[theorem]{Corollary}
\newtheorem{lemma}[theorem]{Lemma}
\newtheorem{question}[theorem]{Question}
\theoremstyle{definition}
\newtheorem{defn}[theorem]{Definition}
\def \mc{\mathcal}
\def \({\left(}
\def \){\right)}
\def \<{\langle}
\def \>{\rangle}
\begin{document}

\title[On equivariant Serre problem for principal bundles]{On equivariant Serre problem for 
principal bundles }

\author[I. Biswas]{Indranil Biswas}
\address{School of Mathematics, Tata Institute of Fundamental Research, Mumbai, India }

\email{indranil@math.tifr.res.in}

\author[A. Dey]{Arijit Dey}

\address{Department of Mathematics, Indian Institute of Technology-Madras, Chennai, India}

\email{arijitdey@gmail.com}

\author[M. Poddar]{Mainak Poddar}

\address{Middle East Technical University, Northern Cyprus Campus, Guzelyurt, Mersin 
10, Turkey}

\email{mainakp@gmail.com}

\subjclass[2010]{14J60, 32L05, 14M25}

\keywords{Principal bundle, equivariant bundle, Serre problem, toric variety}

\begin{abstract} 
Let $E_G$ be a $\Gamma$--equivariant algebraic principal $G$--bundle over a
normal complex affine 
variety $X$ equipped with an action of $\Gamma$, where 
$G$ and $\Gamma$ are complex linear algebraic groups. Suppose $X$ is contractible as a 
topological $\Gamma$--space with a dense orbit, and $x_0 \in X$ is a $\Gamma$--fixed point. We show that if
$\Gamma$ is reductive, then $E_G$ admits a $\Gamma$--equivariant isomorphism with the product
principal $G$--bundle $X \times_{\rho} E_G(x_0)$, where $\rho\,:\, \Gamma \,
\longrightarrow\, G$ is a homomorphism between algebraic groups. As a consequence, any torus equivariant 
principal $G$-bundle over an affine toric variety is equivariantly trivial. This leads to a classification of torus 
equivariant principal $G$-bundles over any complex toric variety, generalizing the main result of \cite{BDP}. 
\end{abstract}

\maketitle

\section{Introduction}

Let $\Gamma$ be a reductive complex algebraic group acting algebraically on a complex 
affine variety $X$. Let $G$ be a complex linear algebraic group, and let $E_G$ be a 
$\Gamma$--equivariant principal $G$--bundle on $X$. Given an algebraic group 
homomorphism $\rho \,:\, \Gamma \,\longrightarrow\, G$, the product bundle $X 
\times_{\rho} G$ equipped with diagonal action of $\Gamma$ is the simplest example of a 
$\Gamma$--equivariant principal $G$--bundle. Here we are interested in the 
following question:

\begin{question}\label{q1.1}
Is $E_G$ equivariantly isomorphic to the above product (trivial) principal $G$--bundle
$X \times_{\rho} G$ for some homomorphism $\rho$\,? 
\end{question}

When the structure group $G \,=\,{\rm GL}(n,\mathbb C)$ with $\Gamma$ being the trivial 
group, and $X$ is an affine space, Question \ref{q1.1} was first studied by Serre, and 
he proved that any vector bundle on an affine space is stably trivial \cite{Serre}. 
Later Quillen \cite{Qui} and Suslin \cite{Sus} independently proved that the answer is 
affirmative for the above case of $G \,=\,{\rm GL}(n,\mathbb C)$. This was generalized 
to the case when $X$ is an affine toric variety by Gubeladze \cite{Gub}. When $\Gamma$ 
is trivial, but $G$ is an arbitrary connected reductive algebraic group, and $X$ is an 
affine space, Raghunathan, \cite{Raghu}, showed that the answer is again affirmative.

When $\Gamma$ is arbitrary, it is known that Question \ref{q1.1} has a negative answer 
in general even for vector bundles. This was first shown by Schwarz \cite{Sch}; see 
also the work of Masuda and Petrie \cite{MP1}. However, when $\Gamma$ is a complex torus 
$T$ with $X$ being an affine toric variety under an action of $T$, and $G\,=\,{\rm 
GL}(n,\mathbb C)$, Kaneyama, \cite{Kan}, and Klyachko, \cite{Kly}, gave affirmative answer 
to Question \ref{q1.1}. Moreover, when $\Gamma$ is abelian, $X$ is a linear 
representation space of $\Gamma$, and $G\,= \,{\rm GL}(n, \mathbb{C})$, 
Masuda, Moser and Petrie gave an affirmative answer in \cite{MMP}. Later Masuda resolved
affirmatively the 
case where $\Gamma$ is abelian, $X$ is an affine toric $\Gamma$--variety and $G\,=\, 
{\rm GL}(n, \mathbb{C})$; thus this work of Masuda, \cite{Mas},
generalizes the results of Kaneyama and Klyachko.

In this paper we give an affirmative answer to Question \ref{q1.1} when the following conditions
hold:
\begin{itemize}
\item $\Gamma$ is a reductive complex algebraic group,

\item $X$ is a topologically contractible normal affine variety,

\item $\Gamma$ acts on $X$ with a dense orbit, and

\item $G$ is an arbitrary linear algebraic group.
\end{itemize}
See Theorem \ref{cod2}.

The proof of Theorem \ref{cod2} combines the classically known topological triviality 
of related bundles with an equivariant Oka principle established by Heinzner and 
Kutzschebauch, \cite{HK}, to infer analytic triviality; after that, applying the 
classical fact, mainly due to Borel, that analytic representations of a reductive 
algebraic group are actually algebraic, the algebraic triviality of bundles
in question is inferred. As 
an application we prove that any $T$-equivariant principal $G$--bundle over an affine 
toric variety is $T$--equivariantly trivial. This was proved earlier in \cite{BDP}
under the extra assumption that $X$ is 
a nonsingular affine toric variety. Using Theorem \ref{cod2} and the method of 
\cite{BDP}, we give a Kaneyama--type classification of $T$-equivariant principal 
$G$--bundles (up to isomorphism) over an arbitrary toric variety.

After this article was accepted for publication, Gerald W. Schwarz informed us of a more recent and  stronger version of
 equivariant Oka principle due to Kutzschebauch-L\'arusson-Schwarz \cite{KLS4} that would apply to generalized holomorphic
 principal bundles over a reduced Stein space. See also their related works \cite{KLS1, KLS2, KLS3}.

\section{Main results}

All the objects are defined over the field $\mathbb C$ of complex numbers unless 
mentioned specifically. Let $\Gamma$ be a complex reductive algebraic group with a maximal 
compact subgroup $K$. This means that $\Gamma \,=\, K^{\mathbb C}$, the 
complexification of $K$. Let $G$ be a complex affine algebraic group and $X$ is an affine variety. 
Assume that $\Gamma$ acts on $X$ with a dense 
orbit $O$ of it in $X$. In particular, $\Gamma$ may be an 
algebraic torus and $X$ a toric variety.
 
Let $E_G$ be a $\Gamma$--equivariant algebraic principal $G$--bundle over $X$. This 
means that the action of $\Gamma$ on $X$ lifts to an action on $E_G$ which commutes 
with the action of $G$ on $E_G$ defining the principal $G$--bundle structure of $E_G$, in
other words,
$$ \gamma(eg)\,=\, (\gamma e)g \quad {\rm for \; all} \; \gamma \in \Gamma, \; e \in E_G, \; g\in G\,. $$ 
Given an algebraic group homomorphism $\rho\,:\Gamma \longrightarrow G$, the product bundle 
$$X \times G\,\longrightarrow\, X$$ equipped with the diagonal action of $\Gamma$
naturally becomes a
$\Gamma$--equivariant principal $G$--bundle;
this $\Gamma$--equivariant principal $G$--bundle is denoted by $X \times_{\rho} G$.
We call $X \times_{\rho} G$ a $\Gamma$--equivariant product bundle.

Let $X(\mathbb R)$ denote the underlying topological space for $X$ with real norm topology. Let 
$\Gamma(\mathbb R)$ and $G(\mathbb R)$ denote the Lie groups underlying $\Gamma$ and 
$G$ respectively. Note that the action of $\Gamma$ on $X$ induces an action of 
$\Gamma(\mathbb R)$ on $X(\mathbb R)$. The affine variety $X$ is said to be 
equivariantly contractible as a topological $\Gamma$--space if there exists a 
point $x_0 \,\in\, X(\mathbb R)$ and a $\Gamma(\mathbb R)$--equivariant continuous map
$$F\,:\, X(\mathbb R) \times [0,1]
\,\longrightarrow \,X(\mathbb R)\, ,$$ such that $F(x,1)\,=\, 
x$ and $F(x,0) \,=\,x_0$ for all $x \,\in\, X(\mathbb R)$. Note that $x_0$ is
fixed by the action of $\Gamma(\mathbb R)$. Since $\Gamma$ is reductive, the good quotient $X/\Gamma$ exists. 
As $X$ has a dense orbit of $\Gamma$, $X/\Gamma$ is a point. Since fixed points inject into the good quotient,
$\Gamma$ has a unique fixed point in $X$. It follows that that $x_0$ is the unique $\Gamma(\mathbb R)$--fixed point in $X(\mathbb R)$. 

\begin{theorem}\label{cod2}  Let $X$ be an irreducible normal affine variety over $\mathbb{C}$.
Suppose $X$ is equivariantly contractible as a topological $\Gamma$--space, 
and $x_0 \,\in\, X$ is the fixed point for the action of $\Gamma$. Assume $X$ has a
dense orbit $O$.  
Then there exists a $\Gamma$--equivariant isomorphism $E_G \,\cong\, X \times_{\rho} E_G(x_0) $ of
algebraic principal $G$--bundles.
\end{theorem}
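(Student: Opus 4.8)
The plan is to move the problem out of the algebraic category into the topological one, trivialize there, and then climb back through the analytic category via an equivariant Oka principle, finally descending to the algebraic category using normality of $X$ together with Borel's algebraicity theorem. First I would reduce the statement to the construction of a single section. Fix a point $e_0 \in E_G(x_0)$ in the fiber over the $\Gamma$--fixed point. Since $x_0$ is fixed and $\Gamma$ acts algebraically on $E_G$ commuting with $G$, there is a unique algebraic homomorphism $\rho\,:\,\Gamma \to G$ with $\gamma\cdot e_0 = e_0\cdot\rho(\gamma)$, and this identifies $E_G(x_0)$ with $G$ as a $\Gamma$--space. A $\Gamma$--equivariant isomorphism $E_G \cong X\times_{\rho}E_G(x_0)$ is then exactly the datum of a $\rho$--equivariant algebraic section $s\,:\,X\to E_G$, i.e. one satisfying $s(\gamma x)=\gamma\cdot s(x)\cdot\rho(\gamma)^{-1}$ and $s(x_0)=e_0$. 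Thus the theorem reduces to producing such an algebraic section.

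Second, for the topological triviality I would restrict the structure group to a maximal compact $K\subset\Gamma$. The equivariant contraction $F$ restricts to a $K$--equivariant contraction of $X(\mathbb R)$ onto $x_0$, so $X(\mathbb R)$ is $K$--equivariantly contractible. By equivariant homotopy invariance of principal bundles under the compact group $K$, the $K$--equivariant homotopy between $\mathrm{id}$ and the constant map $c_{x_0}$ gives $E_G\cong \mathrm{id}^*E_G\cong c_{x_0}^*E_G = X(\mathbb R)\times_{\rho}G(\mathbb R)$ as $K$--equivariant topological bundles, the last identification using $E_G(x_0)\cong G(\mathbb R)$ and the isotropy representation $\rho|_K$. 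Hence $E_G$ is $K$--equivariantly topologically trivial. Third, for analytic triviality I note that the normal affine variety $X$, with its $\Gamma=K^{\mathbb C}$--action, is a reduced normal Stein $\Gamma$--space, so the equivariant Oka principle of Heinzner and Kutzschebauch applies: the passage from $\Gamma$--equivariant holomorphic to $K$--equivariant topological isomorphism classes of such bundles is a bijection. As $E_G$ and $X\times_{\rho}G$ agree topologically by the previous step, they are $\Gamma$--equivariantly biholomorphic; equivalently there is a holomorphic $\rho$--equivariant section $s_{\mathrm{hol}}\,:\,X\to E_G$ with $s_{\mathrm{hol}}(x_0)=e_0$.

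The \emph{main obstacle} is the final upgrade from holomorphic to algebraic triviality, and this is where the dense orbit, normality, and Borel's theorem enter. Restricting $s_{\mathrm{hol}}$ to the dense orbit $O=\Gamma\cdot y\cong\Gamma/\Gamma_y$, a $\rho$--equivariant section is determined by its single value $s_{\mathrm{hol}}(y)\in E_G(y)$ through the orbit map $\gamma\mapsto \gamma\cdot s_{\mathrm{hol}}(y)\cdot\rho(\gamma)^{-1}$. Equivariance forces $h\cdot s_{\mathrm{hol}}(y)=s_{\mathrm{hol}}(y)\cdot\rho(h)$ for $h\in\Gamma_y$, so this map is $\Gamma_y$--invariant and descends to an algebraic map $s_0\,:\,O\to E_G|_O$; here Borel's theorem guarantees that any analytic intertwiner matching the isotropy homomorphism at $y$ with $\rho|_{\Gamma_y}$, arising from the Oka step, is in fact algebraic, so $s_{\mathrm{hol}}|_O=s_0$ is algebraic. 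It then remains to extend $s_0$ algebraically across $X\setminus O$, using that $s_{\mathrm{hol}}$ is already a global holomorphic extension.

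For this last extension I would pass to graphs. Let $Z\subset X\times E_G$ be the Zariski closure of the graph of $s_0$; it is irreducible of dimension $\dim X$. Its analytification is analytically closed and contains the graph of $s_{\mathrm{hol}}$, which is the analytic closure of the graph of $s_0$; comparing dimensions shows the graph of $s_{\mathrm{hol}}$ equals $Z^{\mathrm{an}}$, so $Z^{\mathrm{an}}$ projects bijectively and biholomorphically onto $X(\mathbb R)$. Consequently the first projection $Z\to X$ is a bijective birational morphism onto the normal variety $X$, hence an isomorphism by Zariski's Main Theorem. Its inverse followed by projection to $E_G$ exhibits $s_{\mathrm{hol}}$ as an algebraic $\rho$--equivariant section $s$, which yields the desired $\Gamma$--equivariant algebraic isomorphism $E_G\cong X\times_{\rho}E_G(x_0)$. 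I expect the delicate points to be verifying the precise hypotheses of the Heinzner--Kutzschebauch Oka principle (reducedness and the Stein $\Gamma$--structure) and controlling the graph--closure argument at the singular and non--generic locus, where normality of $X$ is essential.
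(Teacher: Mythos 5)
Your proposal is correct and follows the same three--stage descent as the paper: topological $K$--equivariant triviality from the equivariant contraction, holomorphic $\Gamma$--equivariant triviality via the Heinzner--Kutzschebauch equivariant Oka principle, and algebraization using the dense orbit together with normality. The only substantive difference is in the last step: where you take the Zariski closure $Z$ of the graph of the algebraic section over $O$, identify $Z^{\mathrm{an}}$ with the graph of the holomorphic section, and invoke Zariski's Main Theorem for the bijective birational projection $Z\to X$ onto the normal variety $X$, the paper instead observes that the holomorphic section is rational (being algebraic on the dense open orbit) and continuous, and applies the valuation--theoretic criterion $A=\bigcap_{\mathrm{ht}(\mathfrak p)=1}A_{\mathfrak p}$ for a normal domain (Lemma \ref{valuation}) to conclude regularity directly; the two devices are interchangeable here, both resting on normality of $X$, and your route additionally needs the standard GAGA fact that an irreducible variety has irreducible analytification to pin down $Z^{\mathrm{an}}$. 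One point you leave implicit, which the paper makes explicit, is why the $K$--equivariant covering homotopy theorem applies at all: one needs $E_G$ to be locally trivial as a $K$--equivariant bundle, which the paper derives from the slice theorem for the Cartan $(K\times G(\mathbb R))$--action on $E_G$ (Palais; tom Dieck, Prop.\ 8.10); since $G(\mathbb R)$ is in general not compact this is not automatic and should be stated rather than absorbed into the phrase ``equivariant homotopy invariance.''
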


\begin{proof}
Note that $X$ is a reduced Stein space.

First we claim that the existence of an 
algebraic $\Gamma$--equivariant product bundle structure of $E_G$ is equivalent to the 
existence of an analytic $\Gamma$--equivariant product bundle structure.

To prove the above claim, 
assume that there exists a $\Gamma$--equivariant complex analytic isomorphism $\phi\,:\, X 
\times E_G(x_0)\,\longrightarrow\, E_G$. Fix an element $e\,\in\, E_G(x_0)$. The action of 
$\Gamma$ on $E_G(x_0)$ is given by
$$ \gamma (eg) \,=\, e \rho(\gamma) g \quad {\rm for \; all} \; \gamma \in \Gamma, \; g\in G\, ,$$ 
where $$\rho: \Gamma \longrightarrow G$$ is a holomorphic (hence, algebraic) homomorphism
of groups.
Define an analytic section $s$ of $E_G$ by $s(x)\,=\, \phi(x,e)$. Then we have
$$ \gamma s(x) = s(\gamma x) \rho(\gamma) \,.$$ This implies that 
\begin{equation}\label{eqsec}
s(\gamma x) \,=\, \gamma s(x) \rho(\gamma^{-1}) \,.
\end{equation}
Let $x$ be a closed point in the dense orbit $O \,\subset\, X$ of $\Gamma$. Since the action of $\Gamma$ and the homomorphism 
$\rho$ are both algebraic, it follows from \eqref{eqsec} that $s$ is an algebraic section of $E_G$ over $O$. By applying 
Lemma \ref{valuation},  $s$ is regular on $X$.

Secondly, by a corollary of the Homotopy Theorem, \cite[p.~341]{HK}, the question of 
existence of an analytic $\Gamma$--equivariant product bundle structure reduces to the question 
of existence of a topological $K$--equivariant product bundle structure.

Finally, $E_G$ is a Cartan $(K \times G(\mathbb R))$--space in the sense of \cite{Pal}. Therefore, it admits a slice at 
any point $e' \,\in\, E_G$. This ensures the existence of a topological $K$--equivariant trivialization of $E_G$ near any 
$x \,\in\, X$ (cf. Proposition 8.10 of \cite{TD}); see also \cite[Corollary 2.11]{Lash}. Then we may apply the equivariant 
homotopy principle (cf. Theorem 8.15 of \cite{TD}) to obtain a topological $K$--equivariant product bundle structure for 
$E_G$ over $X$.
\end{proof}

\begin{lemma} \label{valuation}
Let $X\,=\,{\rm Spec}(A)$ be a complex affine variety.  Assume that $A$ is a normal domain. Then a rational function $\phi$ on $X$ is regular if it  is continuous in the complex analytic topology.
\end{lemma}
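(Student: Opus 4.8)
The key input is the normality of $A$. Since $A$ is a Noetherian normal domain, it is a Krull domain, and hence
$$A \,=\, \bigcap_{\mathrm{ht}\,\mf p \,=\, 1} A_{\mf p}\,,$$
where the intersection runs over all height one prime ideals $\mf p$ of $A$ and each localization $A_{\mf p}$ is a discrete valuation ring. Writing $v_{\mf p}$ for the associated valuation, a rational function $\phi \in \mathrm{Frac}(A)$ is therefore regular (that is, $\phi \in A$) if and only if $v_{\mf p}(\phi) \,\geq\, 0$ for every height one prime $\mf p$. The plan is to show that continuity of $\phi$ forces exactly this nonnegativity, i.e. that $\phi$ can have no pole in codimension one.

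I would argue by contradiction. Suppose $\phi \notin A$; then there is a height one prime $\mf p$, corresponding to an irreducible divisor $Z \,=\, V(\mf p) \,\subset\, X$, with $n \,:=\, -v_{\mf p}(\phi) \,>\, 0$. Because $A$ is normal it satisfies Serre's condition $R_1$, so the singular locus of $X$ has codimension at least two and therefore does not contain $Z$. Consequently a general point $z$ of $Z$ is a smooth point of $X$ at which $Z$ is also smooth; passing to such a $z$ we may, in addition, arrange that $z$ avoids the other components of the divisor of $\phi$. In a small analytic neighbourhood of $z$ we may then choose a local equation $t$ for $Z$ as part of an analytic coordinate system, and write $\phi \,=\, t^{-n} w$, where $w$ is holomorphic and nonvanishing at $z$ (this is precisely the statement $v_{\mf p}(w) \,=\, 0$, read off in the discrete valuation ring $A_{\mf p}$).

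The contradiction is then immediate: as $z'$ ranges over nearby points of $X$ off $Z$, where $\phi$ is regular, with $t(z') \to 0$, we have $|\phi(z')| \,=\, |t(z')|^{-n}\,|w(z')| \,\longrightarrow\, \infty$, since $|w|$ is bounded away from $0$ near $z$. This is incompatible with $\phi$ extending to a continuous $\mathbb C$-valued function near $z$, whose value $\phi(z)$ would have to be finite. Hence no such $\mf p$ exists, $v_{\mf p}(\phi) \geq 0$ for all height one primes $\mf p$, and so $\phi \in A$.

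The one point that genuinely requires normality — and the main thing to get right — is the production of the test point $z$: it is the $R_1$ property that guarantees a codimension one pole of $\phi$ meets the smooth locus of $X$, so that the local model $\phi = t^{-n} w$ is available. Normality cannot be dropped: on the cuspidal cubic $\{y^2 = x^3\}$ the rational function $y/x$ pulls back to the coordinate $t$ under the normalization $t \mapsto (t^2,t^3)$, hence extends continuously across the cusp, yet it is not regular. This example pinpoints exactly where the argument would break without the hypothesis that $A$ is a normal domain.
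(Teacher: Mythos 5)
Your proof is correct and takes essentially the same route as the paper: normality gives $A=\bigcap_{\mathrm{ht}\,\mathfrak p=1}A_{\mathfrak p}$ with each $A_{\mathfrak p}$ a discrete valuation ring, so everything reduces to showing $v_{\mathfrak p}(\phi)\ge 0$ for every height one prime $\mathfrak p$. The only difference is that you actually justify that reduction (via $R_1$, a general smooth point of $V(\mathfrak p)$, and the local model $\phi=t^{-n}w$ forcing $|\phi|\to\infty$), whereas the paper asserts it in a single sentence; your elaboration, and the cuspidal cubic example showing normality cannot be dropped, are both sound.
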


\begin{proof}
Let $K$ be the quotient field of $A$, and let $\phi \,\in\, K$ be the given rational function. Let $\mathfrak p$ be a 
height $1$ prime ideal in $A$. Let Since $A$ is a normal domain, the local ring $A_{\mathfrak p}$ is a discrete valuation 
ring. Let $v_{\mathfrak p}$ be its valuation. Since $\phi$ has a continuous extension on $X$, we have
$v_{\mathfrak p}(\phi) \,\ge \,0$. Hence $\phi \,\in\, A_{\mathfrak p}$ for every height $1$ prime ideal $\mathfrak p$. Since $A$ is normal we have $A 
\,=\,\bigcap_{\text{ht}(\mathfrak p)\,=\,1}A_{\mathfrak p}$, see \cite[Corollary 11.4]{Eis} or \cite[Corollary 5.22]{AM}. Hence $\phi$ is regular on $X$. 
\end{proof}

The above results lead to a classification of equivariant principal bundles over arbitrary toric varieties, generalizing the classification  
in the nonsingular case in \cite{BDP}.  Note that every toric variety is normal (cf. \cite[p.~29]{Ful}). 

\begin{corollary}\label{affinetoric} Let $X$ be an affine toric variety.
Then a torus equivariant algebraic
principal $G$--bundle over $X$ admits an equivariant product bundle structure.
\end{corollary}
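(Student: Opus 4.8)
The plan is to derive the Corollary from Theorem~\ref{cod2} by checking that an affine toric variety $X=X_\sigma$, with the torus $T$ playing the role of $\Gamma$, satisfies the hypotheses of that theorem. Such an $X$ is irreducible and, being a toric variety, normal (cf. \cite[p.~29]{Ful}); by definition it carries a $T$-action with a dense orbit, namely the open torus $T$ itself. Thus the only condition not immediate is equivariant contractibility: one must produce a $T$-fixed point $x_0$ together with a $T(\RR)$-equivariant deformation retraction of $X(\RR)$ onto $x_0$. I expect this to be the crux.

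When the defining cone $\sigma\subset N_\RR$ is \emph{full dimensional}, I would construct the contraction explicitly. Write $X_\sigma={\rm Spec}\,\CC[\sigma^\vee\cap M]$, where $M$ is the character lattice dual to the cocharacter lattice $N$, so that closed points are semigroup homomorphisms $\sigma^\vee\cap M\to(\CC,\cdot)$. Choose a primitive lattice vector $v$ in the interior of $\sigma$; it determines a one-parameter subgroup $\lambda_v\colon\CC^*\to T$. Since $v$ is interior and $\sigma$ is full dimensional, the pairing $\<m,v\>$ is strictly positive for every nonzero $m\in\sigma^\vee\cap M$, so $\lambda_v(r)$ scales each nonconstant character by a positive power of $r$. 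Consequently $F(x,r):=\lambda_v(r)\cdot x$ extends continuously by $F(x,0):=x_0$, where $x_0$ is the distinguished fixed point, i.e. the semigroup homomorphism sending the identity character to $1$ and every other character of $\sigma^\vee\cap M$ to $0$ (this is well defined precisely because strong convexity of $\sigma^\vee$ forbids nonzero units). The map $F$ is the desired contraction, and it is $T(\RR)$-equivariant because $T$ is abelian, so $\lambda_v(r)$ is central; Theorem~\ref{cod2} then yields the equivariant product structure.

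The main obstacle is the general case, in which $\sigma$ is strongly convex but of dimension $d<\dim N$. Then the orbit corresponding to $\sigma$ has positive dimension, so $X_\sigma$ has \emph{no} $T$-fixed point; hence $X_\sigma$ is not equivariantly contractible and Theorem~\ref{cod2} does not apply directly. I would circumvent this via the standard splitting. Let $N_0=(\Span\sigma)\cap N$ be the saturated sublattice spanned by $\sigma$, choose a complement $N=N_0\oplus N_1$, and write $T=T_0\times T_1$ for the corresponding decomposition of the torus. Then $\sigma$ is full dimensional as a cone in $(N_0)_\RR$, and there is a $T$-equivariant isomorphism $X_\sigma\cong X_0\times T_1$, where $X_0$ is the associated full-dimensional affine toric variety with dense torus $T_0$, while $T_1$ acts on itself by translation. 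Restricting $E_G$ to $X_0\times\{1\}$ gives a $T_0$-equivariant bundle, which is equivariantly trivial by the full-dimensional case. I would then spread this trivialization over all of $X_\sigma$ using the free $T_1$-action: the assignment $(t_1,e)\mapsto t_1\cdot e$ identifies $T_1\times (E_G|_{X_0})$ with $E_G$ over $T_1\times X_0\cong X_\sigma$, and transporting the $T$-action through this identification upgrades the product structure on the $X_0$-factor to a $T$-equivariant product structure $E_G\cong X_\sigma\times_\rho G$. The point needing care is the bookkeeping of the homomorphism $\rho\colon T_0\times T_1\to G$ produced by this gluing, together with the verification that the resulting isomorphism is algebraic rather than merely analytic; the latter is guaranteed by the valuative argument of Lemma~\ref{valuation} and the Borel algebraicity input already invoked in the proof of Theorem~\ref{cod2}.
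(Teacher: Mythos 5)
Your proposal is correct and follows essentially the same route as the paper: reduce to the fixed-point case via the equivariant splitting $X\cong Y\times O$ (your $X_0\times T_1$) with $T\cong H\times K$, apply Theorem~\ref{cod2} to the full-dimensional factor, and spread the resulting equivariant section over the free $K$-orbit factor. The only difference is that you spell out the one-parameter-subgroup contraction verifying equivariant contractibility in the full-dimensional case, which the paper leaves implicit.
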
 
 
\begin{proof}
This follows immediately from Theorem \ref{cod2} if $X$ has a fixed point of the torus action. In 
the general case, the proof is similar to the proof of \cite[Lemma 2.8]{BDP}. Let $T$ denote 
the dense torus of $X$. The idea is to write $X\,=\, Y \times O$ and $T\,=\, H \times K$ where 
$H$, $K$ are sub-tori of $T$ that act on $Y$, $O$ respectively, such that $Y$ is an affine toric 
variety which is $H$--equivariantly contractible with dense a $H$--orbit, and $K$ acts freely, 
transitively on $O$. The variety $X\,=\, Y \times O$ is endowed with the diagonal action of $H\times K$. Let $E_G$ be a 
$T$--equivariant principal $G$-bundle over $X$. By Theorem \ref{cod2}, there exists an 
$H$--equivariant section (often called semi--equivariant in the literature) of $E_G$ over $Y$.  
Then one extends this section to a $T$--equivariant section of $\mc{E}$ over $X\,=\, Y \times O$ by 
using the lift to $E_G$ of the free, transitive action of $K$ on $O$.
\end{proof}

Now let $X$ be a toric variety corresponding to a fan $\Xi$. Let $T$ be the 
dense torus of $X$. For each cone $\sigma \in \Xi$, let $T_{\sigma}$ denote the stabilizer of the $T$--orbit 
corresponding to $\sigma$. For each $\sigma$, fix a projection homomorphism $\pi_{\sigma}: T \longrightarrow 
T_{\sigma}$ which restricts to the identity map on $T_{\sigma}$.

\begin{defn} Let $\Xi^*$ denote the set of maximal cones in $\Xi$. An admissible collection
$\{\rho_{\sigma}, P(\tau,\sigma) \}$
consists of a collection of homomorphisms $$\{\rho_{\sigma}\,:\,T \,\longrightarrow\, G
\,\mid\, \sigma \,\in\, \Xi^* \}$$ and a collection of elements
$\{ P(\tau,\sigma) \in G \mid \tau, \sigma \in \Xi^*\} $ satisfying the
following conditions:
\begin{enumerate} \item $\rho_{\sigma}$ factors through $\pi_{\sigma}: T \longrightarrow
T_{\sigma}$.

 \item For every pair $(\tau,\sigma)$ of maximal
cones, $ \rho_{\tau} (t) P(\tau, \sigma) \rho_{\sigma}^{-1}(t)$ extends to a $G$--valued
regular algebraic function over $X_{\sigma} \bigcap X_{\tau}$.

\item $P(\sigma, \sigma) = 1_G$ for all $\sigma$.

\item For every triple $(\tau, \sigma, \delta)$ of
maximal cones, the cocycle condition
$ P(\tau,\sigma) P(\sigma, \delta)$ $P(\delta, \tau) \,=\, 1_G$ holds.
\end{enumerate}

Two such admissible collections $\{\rho_{\sigma}, P(\tau,\sigma) \}$ and
$\{\rho'_{\sigma}, P'(\tau,\sigma) \}$ are equivalent if the following hold:
\begin{enumerate}
\item[(i)] For every $\sigma$ there exists an element $g_{\sigma}\,\in\, G$ such
that $\rho'_{\sigma} \,=\, g_{\sigma}^{-1} \rho_{\sigma} g_{\sigma}$.

\item[(ii)] For every pair $(\tau,\sigma)$, $P'(\tau,\sigma) \,= \,
g_{\tau}^{-1} P(\tau,\sigma) g_{\sigma}$, where $g_{\sigma}$ and $g_{\tau}$ are as in
(i) above.
\end{enumerate}
\end{defn}

Using Corollary \ref{affinetoric}, the following classification theorem can be proved in a similar manner 
as Theorem 3.2 of \cite{BDP}.

\begin{theorem}\label{class} Let $X$ be a toric variety. Let $G$ an affine
algebraic group. Then the isomorphism classes of algebraic $T$--equivariant
principal $G$--bundles on $X$ are in one-to-one correspondence with equivalence classes of
admissible collections $\{\rho_{\sigma}, P(\tau,\sigma) \}$.
\end{theorem}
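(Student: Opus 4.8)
The plan is to follow the proof of Theorem 3.2 of \cite{BDP}, replacing its use of equivariant triviality over \emph{nonsingular} affine charts by Corollary \ref{affinetoric}, which now applies to arbitrary (possibly singular) affine toric charts. Recall that $X \,=\, \bigcup_{\sigma \in \Xi^*} X_\sigma$, since every cone is a face of a maximal one, so the charts attached to maximal cones cover $X$. I would construct the correspondence from isomorphism classes of bundles to equivalence classes of admissible collections as follows. Given a $T$--equivariant principal $G$--bundle $E_G$ on $X$, restrict it to each $X_\sigma$. By Corollary \ref{affinetoric}, $E_G|_{X_\sigma}$ admits a $T$--equivariant product bundle structure, that is, an equivariant isomorphism $E_G|_{X_\sigma} \,\cong\, X_\sigma \times_{\rho_\sigma} G$ together with a semi--equivariant section $s_\sigma$ over $X_\sigma$. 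The distinguished point $x_\sigma \,\in\, X_\sigma$ has stabilizer $T_\sigma$, and the isotropy action of $T_\sigma$ on the fiber $E_G(x_\sigma)$ defines a homomorphism $\bar\rho_\sigma \colon T_\sigma \to G$; setting $\rho_\sigma \,=\, \bar\rho_\sigma \circ \pi_\sigma$ gives a homomorphism $T \to G$ factoring through $\pi_\sigma$, which is exactly condition (1). That this is the homomorphism appearing in the product structure follows from the decomposition $X_\sigma \cong Y_\sigma \times O_\sigma$, $T \cong H_\sigma \times K_\sigma$ of Corollary \ref{affinetoric}, where $H_\sigma \cong T_\sigma$ acts on the contractible factor $Y_\sigma$ and $K_\sigma$ acts freely on $O_\sigma$: the section is obtained by extending an $H_\sigma$--equivariant section along the free $K_\sigma$--action, so its isotropy homomorphism sees only $T_\sigma$.

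Next I would compare the trivializations over the overlaps. Over the dense torus $O \,\subset\, X_\sigma \cap X_\tau$ both $s_\sigma$ and $s_\tau$ are sections, so there is a unique $G$--valued function $\theta_{\tau\sigma}$ with $s_\tau \,=\, s_\sigma \cdot \theta_{\tau\sigma}$. Applying the semi--equivariance relation \eqref{eqsec} to each of $s_\sigma$ and $s_\tau$ and cancelling (using freeness of the $G$--action) forces $\theta_{\tau\sigma}(t)$ on the orbit to take the form $\rho_\tau(t)\, P(\tau,\sigma)\, \rho_\sigma^{-1}(t)$, where $P(\tau,\sigma) \,\in\, G$ is the element comparing $s_\sigma$ and $s_\tau$ at the base point. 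Since $s_\sigma$ and $s_\tau$ are algebraic trivializations of the \emph{same} bundle $E_G|_{X_\sigma \cap X_\tau}$, the transition $\theta_{\tau\sigma}$ is regular on $X_\sigma \cap X_\tau$; this is condition (2), and it is the point where normality of the toric charts and Lemma \ref{valuation} enter, guaranteeing that a rational expression continuous in the analytic topology is in fact regular. The normalization $P(\sigma,\sigma) \,=\, 1_G$ is immediate, and the cocycle identity for the transition functions of $E_G$ over triple overlaps yields condition (4). Thus $\{\rho_\sigma, P(\tau,\sigma)\}$ is an admissible collection.

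It then remains to check that the assignment is well defined and bijective, and that it respects the equivalence relations. Replacing the trivializations $s_\sigma$ by $s_\sigma \cdot g_\sigma$ for $g_\sigma \,\in\, G$ conjugates $\rho_\sigma$ to $g_\sigma^{-1}\rho_\sigma g_\sigma$ and transforms $P(\tau,\sigma)$ into $g_\tau^{-1}P(\tau,\sigma)g_\sigma$, exactly as in (i) and (ii), so the equivalence class of the collection is independent of the chosen trivializations and the map $E_G \mapsto \{\rho_\sigma, P(\tau,\sigma)\}$ is well defined on isomorphism classes. For the inverse, given an admissible collection I would glue the product bundles $X_\sigma \times_{\rho_\sigma} G$ using the transition functions $\rho_\tau(t)P(\tau,\sigma)\rho_\sigma^{-1}(t)$, which are regular by condition (2); conditions (3) and (4) make the gluing consistent and produce a $T$--equivariant principal $G$--bundle whose associated collection is the given one, with equivalent collections yielding equivariantly isomorphic bundles. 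Injectivity follows by reversing the comparison argument.

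The step I expect to be the main obstacle is establishing condition (1) together with the regularity asserted in condition (2). The factoring of $\rho_\sigma$ through $\pi_\sigma$ is not formal: it relies on the precise product structure $X_\sigma \cong Y_\sigma \times O_\sigma$, $T \cong H_\sigma \times K_\sigma$ supplied by Corollary \ref{affinetoric}, so that the homomorphism recording the fiber action over the contractible factor only involves the stabilizer $T_\sigma$. The regularity of the transition data, while conceptually clear, rests on the normality of the toric charts and on Lemma \ref{valuation}; in the singular case this is precisely the input that was not available in \cite{BDP} and that Corollary \ref{affinetoric} now provides, which is what allows the classification to be extended from nonsingular to arbitrary toric varieties.
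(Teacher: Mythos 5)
Your proposal is correct and follows exactly the route the paper takes: the paper's entire proof is the one-line remark that Theorem \ref{class} follows from the argument of Theorem 3.2 of \cite{BDP} with Corollary \ref{affinetoric} supplying equivariant triviality over the (possibly singular) affine charts, which is precisely your plan. Your fleshed-out details — extracting $\rho_\sigma$ from the isotropy action at the distinguished point via the splitting $X_\sigma \cong Y_\sigma \times O_\sigma$, deriving the form $\rho_\tau(t)P(\tau,\sigma)\rho_\sigma^{-1}(t)$ of the transition functions from semi-equivariance of the sections, and gluing for the inverse — are consistent with that argument, the only quibble being that Lemma \ref{valuation} is really used inside Theorem \ref{cod2} to make the local sections algebraic rather than directly for condition (2).
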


The following is then obtained exactly as Corollary 3.4 of \cite{BDP}.

\begin{corollary}\label{nil} If $G$ is a nilpotent group, then an algebraic
$T$--equivariant principal $G$--bundle over a toric
variety  admits an equivariant reduction of structure group to a
maximal torus of $G$. In particular, if $G$ is unipotent then the
bundle is trivial with trivial $T$--action.
\end{corollary}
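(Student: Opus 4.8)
The plan is to read off the corollary from the classification in Theorem~\ref{class}, following \cite[Corollary~3.4]{BDP}, by combining the description of the bundle via an admissible collection with the structure theory of connected nilpotent groups. Recall that a connected nilpotent linear algebraic group $G$ is the direct product $G \,=\, T_G \times G_u$ of its maximal torus $T_G$ and its subgroup of unipotent elements $G_u$; here $T_G$ is exactly the set of semisimple elements of $G$ and is central, and the projection $q\,:\,G \,\longrightarrow\, G_u$ is an algebraic group homomorphism with kernel $T_G$. I will use this decomposition to split the data of an admissible collection into a torus part and a unipotent part, trivialize the latter by an equivalence, and thereby produce a collection valued entirely in $T_G$, which is precisely an equivariant reduction of structure group to $T_G$.

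First I would represent the bundle by an admissible collection $\{\rho_\sigma, P(\tau,\sigma)\}$ and note that each homomorphism $\rho_\sigma\,:\,T \,\longrightarrow\, G$ automatically lands in $T_G$: its image is a quotient of a torus, hence a torus, hence consists of semisimple elements, and $T_G$ is the full set of semisimple elements of $G$. Since $T_G$ is central, $g_\sigma^{-1}\rho_\sigma g_\sigma \,=\, \rho_\sigma$ for every $g_\sigma \in G$, so the homomorphism part already lies in the maximal torus and is untouched by the equivalence relation.

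The key step is to kill the unipotent components of the gluing elements $P(\tau,\sigma)$. Applying $q$ to conditions (3) and (4) of the definition of an admissible collection, the elements $c(\tau,\sigma) \,:=\, q(P(\tau,\sigma)) \in G_u$ satisfy $c(\sigma,\sigma) = 1$ and $c(\tau,\sigma)c(\sigma,\delta)c(\delta,\tau) = 1$, whence $c(\tau,\sigma)c(\sigma,\delta) = c(\tau,\delta)$. Because these are merely $G_u$--valued functions on pairs of maximal cones, subject only to this abstract cocycle identity, the cocycle is a coboundary: fixing a base cone $\sigma_0$ and setting $b_\sigma \,:=\, c(\sigma_0,\sigma)^{-1} \in G_u$, one gets $c(\tau,\sigma) \,=\, b_\tau b_\sigma^{-1}$. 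Taking $g_\sigma \,:=\, b_\sigma \in G_u \subseteq G$ and passing to the equivalent collection with $P'(\tau,\sigma) \,=\, g_\tau^{-1}P(\tau,\sigma)g_\sigma$, the unipotent component of $P'(\tau,\sigma)$ equals $b_\tau^{-1} c(\tau,\sigma) b_\sigma \,=\, 1$, so $P'(\tau,\sigma) \in \ker q \,=\, T_G$; meanwhile $\rho'_\sigma \,=\, \rho_\sigma$ still takes values in $T_G$. Thus the whole collection is valued in $T_G$, and by Theorem~\ref{class} this is the desired equivariant reduction of structure group to the maximal torus. If moreover $G$ is unipotent, then $T_G \,=\, \{1_G\}$, so the reduction forces every $\rho_\sigma$ to be trivial and every $P'(\tau,\sigma) \,=\, 1_G$, i.e. the collection of the product bundle with trivial $T$--action, giving triviality.

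The routine points — that applying an arbitrary tuple $\{g_\sigma\}$ to an admissible collection again yields an admissible collection (so that the transformed collection genuinely represents the same bundle), and that a collection valued in a closed subgroup encodes a reduction of structure group — are formal consequences of the set-up behind Theorem~\ref{class} and are verified exactly as in \cite{BDP}. I do not expect a serious obstacle here: once the structure theorem supplies the central maximal torus $T_G$ and the homomorphism $q$, everything reduces to the centrality of $T_G$ (which makes $\rho_\sigma$ conjugation--invariant and $q$ multiplicative) and to the elementary triviality of a cocycle over the set of maximal cones $\Xi^*$; the only thing to keep straight is the bookkeeping of the two commuting factors $T_G$ and $G_u$.
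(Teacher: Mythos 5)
Your proof is correct and follows essentially the same route as the paper, which gives no argument of its own and simply defers to \cite[Corollary~3.4]{BDP}: decompose the connected nilpotent group as $T_G\times G_u$ with $T_G$ central, observe that the $\rho_\sigma$ automatically land in $T_G$, and remove the unipotent components of the $P(\tau,\sigma)$ by exhibiting the $G_u$--valued cocycle on $\Xi^*$ as a coboundary. The only point worth flagging is that the connectedness of $G$, which your structure theorem genuinely needs (and without which the statement fails, e.g.\ for finite $G$), is implicit rather than explicit in the corollary as stated.
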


{\bf Acknowledgement.} The last named-author thanks Peter Heinzner and Ozcan Yazici for useful discussions on equivariant Oka principle.
The authors also thank an anonymous referee for  interesting questions and helpful suggestions that led to several improvements in the manuscript. The first-named author is supported by a J. C. Bose fellowship. The second-named author is supported by a research grant from NBHM. The
last-named author is supported by an SRP grant from METU NCC.


\begin{thebibliography}{BPV84}

\bibitem{AM}  M. F. Atiyah, and I. G.  Macdonald,  Introduction to commutative algebra. Addison-Wesley Publishing Co., Reading, 
Mass.-London-Don Mills, Ont. 1969.

\bibitem{BDP} I. Biswas, A. Dey, and M. Poddar, A classification of equivariant principal bundles over nonsingular 
toric varieties, {\it Internat. J. Math.} {\bf 27} (2016), no. 14, 16 pp.

\bibitem{TD} T. tom Dieck, {\it Transformation groups}, De Gruyter Studies in Mathematics, 8. Walter de Gruyter and Co., Berlin (1987), 312 pp.

\bibitem{Eis} D. Eisenbud, {\it Commutative algebra with a view toward algebraic geometry}, Graduate Texts in Mathematics {\bf 150}, Springer-Verlag, New York, 1995.

\bibitem{Ful} W. Fulton, {\it Introduction to toric varieties}, Annals of Mathematics Studies,
131, Princeton University Press, Princeton, 1993.


\bibitem{Gub} J. Gubeladze, The Anderson conjecture and a maximal class of monoids over which projective modules are free
 (Russian) {\it Mat. Sb. (N.S.)} {\bf 135}(177) (1988), no. 2, 169--185; translation in Math. USSR-Sb. {\bf 63} (1989), no. 1, 165--180

\bibitem{HK} P. Heinzner and F. Kutzschebauch, An equivariant version of Grauert's
Oka principle, {\it Invent. Math.} {\bf 119} (1995), 317--346.

\bibitem{Kan} T. Kaneyama, On equivariant vector bundles on an almost homogeneous
variety, {\it Nagoya Math. Jour.} {\bf 57} (1975), 65--86.

\bibitem{Kly} A. A. Klyachko, Equivariant bundles over toric varieties,
 (Russian) {\it Izv. Akad. Nauk SSSR Ser. Mat.} {\bf 53} (1989), 1001--1039,
1135; translation in Math. USSR-Izv. {\bf 35} (1990), no. 2, 337--375.

\bibitem{KLS1} F. Kutzschebauch, F.  L\'arusson, and G. W. Schwarz, An Oka principle for equivariant
isomorphisms, {\it J. reine angew. Math.}  {\bf 706} (2015), 193--214.

\bibitem{KLS2} F. Kutzschebauch, F. L\'arusson, and G. W. Schwarz, Homotopy principles for equivariant
isomorphisms, {\it Trans. Amer. Math. Soc.} {\bf 369} (2017), no. 10, 7251--7300.

\bibitem{KLS3} F. Kutzschebauch, F.  L\'arusson, and G. W. Schwarz,  Sufficient conditions for holomorphic linearisation,  {\it Transform. Groups}  {\bf 22} (2017), no. 2,
475--485.

\bibitem{KLS4} F. Kutzschebauch, F. L\'arusson, and G. W. Schwarz, An equivariant parametric Oka principle for bundles of homogeneous spaces, {\it Math. Ann.} {\bf 370} (2018), no. 1-2, 819--839.

\bibitem{Lash}  Lashof, R. K. Equivariant bundles. {\it Illinois J. Math.} 26 1982, no. 2, 257--271. 

\bibitem{Mas} M. Masuda, Equivariant algebraic vector bundles over affine toric varieties,{ \it Transformation group theory }
(Taejon, 1996), 76--84, Korea Adv. Inst. Sci. Tech., Taejon, [1997]. 

\bibitem{MMP} M. Masuda, L. Moser-Jauslin and T. Petrie,  The equivariant Serre problem for abelian groups, {\it Topology} 
{\bf 35} (1996), 329--334.


\bibitem{MP1} M.  Masuda and T. Petrie,  Stably trivial equivariant algebraic vector bundles, {\it J. Amer. Math. Soc. } {\bf 8} (1995), no. 3, 687--714.

\bibitem{Pal} R. S. Palais, On the existence of slices for actions of non-compact Lie groups, {\it Ann. of Math.} {\bf 
73} (1961), 295--323.

\bibitem{Qui} D. Quillen, Projective modules over polynomial rings, {\it Inventiones Mathematicae} {\bf 36} no. 1,  (1976) 167--171. 

\bibitem{Raghu} M. S. Raghunathan,  Principal bundles on affine space, {\it C. P. Ramanujam--a tribute, Tata Inst. Fund. Res. Studies in Math.} {\bf 8} (1978), pp. 187--206. 

\bibitem{Sch} G. W. Schwarz,  Exotic algebraic group actions. {\it C. R. Acad. Sci. Paris SŽr. I Math.} {\bf 309}, no. 2, 1989, 89--94.

\bibitem{Serre} J.-P. Serre, Faisceaux alg\'ebriques coh\'erents, {\it Ann. of Math.} {\bf 61} (1955), 197--278,

\bibitem{Sus} A. A. Suslin, Projective modules over polynomial rings are free, {\it Doklady Akademii Nauk SSSR} (in Russian), {\bf 229}, no. 5 (1976): 1063--1066. Translated in "Projective modules over polynomial rings are free", Soviet Mathematics, {bf 17} no. 4, (1976) 1160--1164.
\end{thebibliography}
\end{document}